\newcommand{\E}{\mathbb E}
\newcommand{\R}{\mathbb R}
\newcommand{\tr}{\mathrm{tr}}
\newcommand{\ds}{\displaystyle}
\newcommand{\manifold}[1]{\mathcal{#1}}
\newcommand{\M}{\manifold{M}}
\newcommand{\D}{\manifold{D}}
\newtheorem{theorem}{Theorem}[section]
\theoremstyle{definition}
\theoremstyle{remark}
\newtheorem{rem}[theorem]{Remark}
\numberwithin{equation}{section}
\begin{document}

\title[Timelike Surfaces in the Minkowski 4-Space]{Fundamental Theorems for Timelike Surfaces in the Minkowski 4-Space}

\author{Victoria Bencheva, Velichka Milousheva}

\address{Institute of Mathematics and Informatics, Bulgarian Academy of Sciences,
Acad. G. Bonchev Str. bl. 8, 1113, Sofia, Bulgaria}
\email{viktoriq.bencheva@gmail.com}
\email{vmil@math.bas.bg}

\subjclass[2010]{Primary 53B30, Secondary 53A35, 53B25}
\keywords{Timelike surfaces, Fundamental theorems}

\begin{abstract}

In the present paper, we study timelike surfaces free of minimal points in the four-dimensional Minkowski space. For each such surface we introduce a geometrically determined 
pseudo-orthonormal frame field and writing the derivative formulas with respect to this moving frame field and using the integrability conditions, we obtain a system of six functions satisfying some natural conditions. In the general case, we prove a Fundamental Bonnet-type theorem (existence and uniqueness theorem) stating that these six functions, satisfying the natural conditions, determine the surface up to a motion. In some particular cases, we reduce the number of functions and give the fundamental theorems.

\end{abstract}

\maketitle 

\section{Introduction}

In the local theory of surfaces in Euclidean and pseudo-Euclidean spaces, one of the basic problems is to determine the surface by a system of some functions satisfying some differential equations. This is the fundamental Bonnet-type theorem giving the natural conditions under which the surface is determined up to a motion. In the Euclidean space $\R^4$, the general fundamental theorem states that each surface free of minimal points is determined up to a motion in  $\R^4$ by eight invariant functions satisfying some natural conditions (differential equations) \cite{GM-1}. 
For the class of minimal surfaces in $\R^4$, the number of the invariant functions  and the number of the differential equations determining the surfaces are reduced to two \cite{Trib-Guad}. 
The surfaces with parallel normalized mean curvature vector field are determined uniquely up to a motion  by three invariant functions satisfying a system of three partial differential equations \cite{G-M-Fil}.

Similar results hold for spacelike surfaces in the Minkowski 4-space $\R^4_1$. The local theory of spacelike surfaces in $\R^4_1$ whose mean
curvature vector at any point is a non-zero spacelike vector or timelike vector, is developed in \cite{GM-2}. This class of surfaces is  determined up to a motion in $\R^4_1$ by eight invariant functions 
satisfying some natural conditions. Spacelike surfaces in $\R^4_1$ whose
mean curvature vector at any point is a lightlike vector are called marginally
trapped surfaces. These surfaces were defined by Roger Penrose in order to
study the global properties of spacetime \cite{Pen}  and play an
important role in the theory of cosmic black holes. Recently,  marginally trapped surfaces satisfying some extra conditions have
been studied intensively from a mathematical viewpoint \cite{Chen-Veken-1}, \cite{Chen-Veken-2}, \cite{Chen-Veken-3}, etc. The invariant theory of marginally trapped surfaces is developed in \cite{GM-3}, where it is proved that the marginally trapped surfaces in $\R^4_1$ are determined up to a motion by seven invariant functions. 
Maximal spacelike and minimal timelike surfaces in $\R^4_1$ are studied in  \cite{Al-Pal} and \cite{G-M-IJM}, respectively, and it is proved that their geometry is determined by two invariant functions, satisfying a system of two partial differential equations.

\vskip 1mm
In the present paper, we study timelike surfaces free of minimal points in the Minkowski 4-space $\R^4_1$. For each such surface we introduce a
pseudo-orthonormal frame field $\{x,y,n_1,n_2\}$, which is geometrically determined by the two lightlike directions  in the tangent space of the surface and the  mean curvature vector field $H$. We call this  pseudo-orthonormal frame field  a \textit{geometric frame field} of the surface. Writing the derivative formulas with respect to the geometric frame field and using the integrability conditions, we obtain a system of six functions satisfying some differential equations (natural conditions). We prove a Fundamental Bonnet-type theorem stating that, in the general case, these six functions, satisfying the natural conditions, determine the surface up to a motion in $\R^4_1$ (Theorem \ref{Th:FundTh6f}). Then we give the fundamental theorems in some particular cases (Theorem \ref{Th:FundTh6f3} and Theorem \ref{Th:FundTh6f2}).

\section{Preliminaries}

Let $\mathbb R^4_1$  be the four-dimensional Minkowski space  endowed with the standard flat metric
$\langle ., . \rangle$ of signature $(3,1)$ given in local coordinates by
$dx_1^2 + dx_2^2 + dx_3^2 -dx_4^2$. The considerations in this paper are local and all functions are supposed to be of class $C^{\infty}$.

Let $\M^2 = (\D , z)$ be a surface in $\R^4_1$, where $\D\subset\R^2$ and $z : \D \to \R^4_1$ is an immersion, i.e. 
$\M^2$ is locally parametrized by  $\M^2: z = z(u,v), \, \, (u,v) \in \D$. 

A surface $\M^2$ in  $\mathbb R^4_1$ is said to be:

\hskip 6mm 
- \textit{timelike}, if the restriction of $\langle ., . \rangle$ to each tangent space of $\M^2$ is indefinite;
 
\hskip 6mm 
- \textit{spacelike}, if the restriction of $\langle ., . \rangle$  to each tangent space of $\M^2$ is positive definite;

\hskip 6mm 
- \textit{lightlike}, if the restriction of $\langle ., . \rangle$  to each tangent space of $\M^2$ is degenerate.

This paper is devoted to the study of timelike surfaces in $\mathbb R^4_1$, so $\langle ., . \rangle$ induces a Lorentzian metric $g$ on $\M^2$. 

We denote by $\widetilde{\nabla}$ and $\nabla$ the Levi Civita connections on $\mathbb R^4_1$ and $\M^2$, respectively. So, we have the following formulas of Gauss and Weingarten 
$$\begin{array}{l}
\vspace{2mm}
\widetilde{\nabla}_xy = \nabla_xy + \sigma(x,y);\\
\vspace{2mm}
\widetilde{\nabla}_x \xi = - A_{\xi} x + D_x \xi,
\end{array}$$
where  $x$ and $y$ are vector fields tangent to $\M^2$ and $\xi$ is a normal vector field. These formulas determine 
the second fundamental tensor $\sigma$, the normal connection $D$
and the shape operator $A_{\xi}$ with respect to $\xi$. In general, $A_{\xi}$ is not diagonalizable.

The mean curvature vector  field $H$ of $\M^2$ is defined as
$$H = \displaystyle{\frac{1}{2}\,  \tr\, \sigma}.$$ 
If the mean curvature vector vanishes identically, i.e.  $H=0$, the surface is called \textit{minimal}.

It is well known that for a timelike surface $\M^2$ in $\R^4_1$, locally there exists a coordinate system  $(u,v)$ such that  the metric tensor $g$ of $\M^2$ has the following form \cite{Lar}:
\begin{equation*} \label{E:Eq-g}
g= - f^2(u, v)(du\otimes dv + dv\otimes du)
\end{equation*}
for some positive function $f(u, v)$. We suppose that $z=z(u, v), (u, v) \in \mathcal{D}$ is such a local parametrization on $\M^2$. 
Then, the coefficients of the first fundamental form are
\begin{equation*}
E = \langle z_u, z_u \rangle = 0; \quad F = \langle z_u, z_v \rangle = - f^2(u, v); \quad G = \langle z_v, z_v \rangle = 0,
\end{equation*}
where $z_u$ and $z_v$ denote the derivatives of the vector function $z(u,v)$, i.e. 
$z_u=\frac{\partial z}{\partial u},  \; 	z_v=\frac{\partial z}{\partial v}$.
Since $\langle z_u, z_u \rangle = 0$ and $\langle z_v, z_v \rangle = 0$, the parameters $(u,v)$ are called isotropic parameters of the surface.

Let us consider the pseudo-orthonormal tangent frame field  of $\M^2$ defined by $x=\displaystyle{\frac{z_u}{f}}$, $y=\displaystyle{\frac{z_v}{f}}$. Obviously,
 $\langle x, x \rangle = 0$,  $\langle x, y \rangle = -1$, $\langle y, y \rangle = 0$. Hence, the mean curvature vector field $H$ is given by 
$$H = - \sigma(x, y).$$

Since we consider surfaces free of minimal points, i.e.  $H \neq 0$ at all points, we can choose a unit normal vector field $n_1$ which is collinear with $H$, i.e. $H = \nu n_1$  for a smooth function $\nu = || H ||$. Then, $\sigma (x,y) = - \nu n_1$.
We consider the unit normal vector field $n_2$ such that $\{n_1, n_2\}$ is an orthonormal frame field of the normal bundle ($n_2$ is determined up to orientation). So, we can write the following formulas for the second fundamental tensor $\sigma$:
\begin{equation}
\begin{array}{l} \label{E:Eq-1}
\vspace{2mm}
\sigma (x,x) = \lambda_1 n_1 + \mu_1 n_2; \\
\vspace{2mm}
\sigma (x,y) = -\nu n_1; \\
\vspace{2mm}
\sigma (y,y) = \lambda_2 n_1 + \mu_2 n_2,
\end{array}
\end{equation} 
where $\lambda_1, \mu_1, \lambda_2, \mu_2$ are  smooth functions defined by:
$$
\lambda_1 = \langle \widetilde{\nabla}_x x, n_1 \rangle; \quad  \mu_1 = \langle \widetilde{\nabla}_x x, n_2 \rangle; \quad
\lambda_2 = \langle \widetilde{\nabla}_y y, n_1 \rangle; \quad  \mu_2 = \langle \widetilde{\nabla}_y y, n_2 \rangle.  
$$
Having in mind that $x=\ds{\frac{z_u}{f}}$, $y=\ds{\frac{z_v}{f}}$ and using $\langle z_u, z_u \rangle = 0, \langle z_u, z_v \rangle = -f^2(u,v),  \langle z_v, z_v \rangle = 0$,
after differentiation we obtain:
\begin{equation} 
\begin{array}{l} \label{E:Eq-2}
\vspace{2mm}
\nabla _x x = \displaystyle{\frac{f_u}{f^2} \,x};  \\
\vspace{2mm}
\nabla _x y = \quad\qquad -\displaystyle{\frac{f_u}{f^2} \,y}; \\
\vspace{2mm}
\nabla _y x = \displaystyle{-\frac{f_v}{f^2} \,x}; \\
\vspace{2mm}
\nabla _y y = \quad\qquad\,\,\, \displaystyle{\frac{f_v}{f^2} \,y}. 
\end{array}
\end{equation}
Denoting $\gamma_1 = \frac{f_u}{f^2} = x(\ln f)$ and $\gamma_2 = \frac{f_v}{f^2} = y(\ln f)$, from \eqref{E:Eq-1} and \eqref{E:Eq-2} we obtain the following derivative formulas:
\begin{equation}\label{E:DerivFormIsotr}
\begin{array}{l}
\vspace{2mm}
\widetilde{\nabla} _x x = \gamma_1 x \qquad\quad + \lambda_1 n_1 + \mu_1 n_2; \\
\vspace{2mm}
\widetilde{\nabla}_x y = \quad\quad -\gamma_1 y -\nu n_1; \\
\vspace{2mm}
\widetilde{\nabla}_y x = -\gamma_2 x \quad\quad -\nu n_1; \\
\vspace{2mm}
\widetilde{\nabla}_y y = \quad\quad\,\,\, \gamma_2 y \, \, +  \lambda_2 n_1 + \mu_2 n_2.
\end{array}
\end{equation}

For the normal frame field $\{n_1, n_2\}$ by use of \eqref{E:DerivFormIsotr} we can  derive the formulas:
\begin{equation}\label{E:DerivNormFormIsotr}
\begin{array}{l}
\vspace{2mm}
\widetilde{\nabla}_x n_1 = -\nu x  + \lambda_1 y \quad\quad + \beta_1 n_2; \\
\vspace{2mm}
\widetilde{\nabla}_y n_1 = \lambda_2 x -\nu y \quad\quad\,\,\,\,\, + \beta_2 n_2; \\
\vspace{2mm}
\widetilde{\nabla}_x n_2 = \quad\quad + \mu_1 y  -\beta_1 n_1; \\
\vspace{2mm}
\widetilde{\nabla}_y n_2 = \mu_2 x \quad\quad\,\,\,\, -  \beta_2 n_1,
\end{array}
\end{equation}
where $\beta_1 = \langle \widetilde{\nabla}_x n_1 , n_2 \rangle$ and  $\beta_2 = \langle \widetilde{\nabla}_y n_1 , n_2 \rangle$.

\begin{rem}
The pseudo-orthonormal frame field $\{x,y,n_1,n_2\}$ is geometrically determined: $x,y$ are the two lightlike directions in the tangent space (determined up to notation); $n_1$ is the unit normal vector field collinear with the mean curvature vector field $H$; $n_2$ is determined by the condition that  $\{n_1, n_2\}$ is an orthonormal frame field of the normal bundle ($n_2$ is determined up to a sign). We call this  pseudo-orthonormal frame field $\{x,y,n_1,n_2\}$ a \textit{geometric frame field} of the surface \cite{BM-2}.
\end{rem}

Formulas
\eqref{E:DerivFormIsotr} and \eqref{E:DerivNormFormIsotr} are the derivative formulas of the surface with respect to the geometric frame field $\{x,y,n_1,n_2\}$. 

The functions $\beta_1$ and  $\beta_2$ characterize the class of surfaces with parallel mean curvature vector field. In \cite{BM-2}, we proved that: 

\begin{itemize}
\item
The timelike surface $\M^2$ has parallel mean curvature vector field if and only if $\beta_1 = \beta _2 =0$ and $\nu =const$.
\item
The timelike surface $\M^2$ has parallel normalized mean curvature vector field if and only if $\beta_1 = \beta _2 =0$ and $\nu \neq const$.
\end{itemize}

In the present paper we study timelike surfaces in $\R^4_1$ with $\beta_1^2 + \beta _2^2 \neq 0$.

\section{Fundamental theorems}

Let $\M^2: z=z(u, v), (u, v) \in \mathcal{D}, \, \D\subset\R^2$ be a local parametrization with respect to isotropic parameters on a timelike surface  $\M^2$ in $\R^4_1$ and $\{x,y,n_1,n_2\}$ be the geometric pseudo-orthonormal frame  field introduced above.

Since the Levi Civita  connection $\widetilde{\nabla}$ of $\R^4_1$ is flat, we have 
\begin{equation} \label{E:Eq-3}
\widetilde{R}(x,y,x) = 0; \quad \widetilde{R}(x,y,y) = 0; \quad \widetilde{R}(x,y,n_1) = 0; \quad \widetilde{R}(x,y,n_2) = 0,
\end{equation}
where 
$$\widetilde{R}(x,y,z) = \widetilde{\nabla}_x \widetilde{\nabla}_y z  - \widetilde{\nabla}_y \widetilde{\nabla}_x z - \widetilde{\nabla}_{[x,y]} z$$ 
 for arbitrary vector fields $x, y, z$. Using \eqref{E:Eq-3} and the derivative formulas \eqref{E:DerivFormIsotr} and \eqref{E:DerivNormFormIsotr}, we 
 obtain the following integrability conditions:
\begin{equation}\label{E:integrCondIsotr1_2}
x(\lambda_2)+y(\nu) + 2 \gamma_1 \lambda_2 - \mu_2 \beta_1=0; 
\end{equation}
\begin{equation}\label{E:integrCondIsotr2_2}
x(\nu)+y(\lambda_1) + 2 \gamma_2 \lambda_1 - \mu_1 \beta_2=0; 
\end{equation}
\begin{equation}\label{E:integrCondIsotr3_2}
x(\mu_2) + 2\gamma_1 \mu_2 + \nu \beta_2 + \lambda_2 \beta_1=0; 
\end{equation}
\begin{equation}\label{E:integrCondIsotr4_2}
y(\mu_1) + 2\gamma_2 \mu_1 + \nu \beta_1 + \lambda_1 \beta_2=0; 
\end{equation}
\begin{equation}\label{E:integrCondIsotr5_2}
x(\gamma_2)+y(\gamma_1) + 2 \gamma_1 \gamma_2 - \nu^2 + \lambda_1 \lambda_2 + \mu_1 \mu_2=0; 
\end{equation}
\begin{equation}\label{E:integrCondIsotr6_2}
x(\beta_2)-y(\beta_1)  + \mu_1 \lambda_2-\lambda_1 \mu_2  + \gamma_1 \beta_2 - \gamma_2 \beta_1=0. 
\end{equation}

\begin{rem} 
If we assume that both $\mu_1$ and $\mu_2$ are zero functions, i.e. $\mu_1(u,v)= 0$ and  $\mu_2(u,v)= 0$ for all $(u, v) \in \mathcal{D}$, then the surface consists of inflection points, i.e. at each point $p \in \M^2$ the first normal space  ${\rm Im} \, \sigma_p = {\rm span} \{\sigma(x, y): x, y \in T_p \M^2 \}$ is one-dimensional.  
In such case, the surface is developable or lies in a 3-dimensional space \cite{Lane}.
\end{rem}

So, further we consider timelike surfaces in $\R^4_1$ that are free of inflection  points, i.e. we assume that $\mu_1^2 + \mu_2^2 \neq 0$ at least in a sub-domain $\mathcal{D}_0$ of $\mathcal{D}$.
Without loss of generality we may assume that $\mu_1 \neq 0$. 
We will consider separately the two cases: 
\begin{itemize}
\item
$\mu_1 \neq 0$ and $\mu_2 \neq 0$ in a sub-domain; 
\item
$\mu_1 \neq 0$ and $\mu_2 = 0$ in a sub-domain.
\end{itemize}

\vskip 2mm
\textbf{Case I}. $\mu_1 \neq 0$ and $\mu_2 \neq 0$ in a sub-domain.
\vskip 1mm

In this case we call the surface a timelike surfaces of \textit{first type}.

From \eqref{E:integrCondIsotr1_2} and \eqref{E:integrCondIsotr2_2} we express the functions $\beta_1$ and $\beta_2$ as follows:
\begin{equation} \label{E:Eq-4}
\beta_1 = \frac{1}{\mu_2} \Bigl ( x(\lambda_2) + y(\nu) + 2 \gamma_1 \lambda_2 \Bigr ); \qquad 
\beta_2 = \frac{1}{\mu_1} \Bigl ( x(\nu) + y(\lambda_1) + 2 \gamma_2 \lambda_1 \Bigr ). 
\end{equation}
Having in mind that $x = \frac{1}{f} \frac{\partial}{\partial u}, \, y = \frac{1}{f} \frac{\partial}{\partial v}$ and 
$\gamma_1 = \frac{f_u}{f^2}, \, \gamma_2 = \frac{f_v}{f^2}$, from \eqref{E:Eq-4} we obtain
\begin{equation*} \label{E:Eq-5}
\beta_1 = \frac{1}{f\mu_2} \Bigl ( (\lambda_2)_u + \nu_v + \lambda_2 (\ln f^2)_u \Bigr ); \qquad 
\beta_2 = \frac{1}{f\mu_1} \Bigl ( \nu_u + (\lambda_1)_v + \lambda_1 (\ln f^2)_v \Bigr ).
\end{equation*}
Hence, the functions  $\beta_1$ and $\beta_2$ are expressed by $f, \nu, \lambda_1, \lambda_2, \mu_1, \mu_2$. So, we have six functions $f, \nu, \lambda_1, \lambda_2, \mu_1, \mu_2$ satisfying the following four equations, which are derived from \eqref{E:integrCondIsotr3_2}, \eqref{E:integrCondIsotr4_2}, \eqref{E:integrCondIsotr5_2}, and \eqref{E:integrCondIsotr6_2}, respectively:
\begin{equation*}\label{E:integrCondIsotr3_case1}
(\lambda^2_2+\mu_2^2)_u + (\ln f^4)_u (\lambda^2_2+\mu_2^2) + 2 \lambda_2 \nu_v + \frac{2 \nu \mu_2}{\mu_1} \Bigl ( \nu_u + (\lambda_1)_v + \lambda_1 (\ln f^2)_v \Bigr )=0;
\end{equation*}
\begin{equation*}\label{E:integrCondIsotr4_case1}
(\lambda_1^2+\mu_1^2)_v + (\ln f^4)_v (\lambda^2_1+\mu_1^2) + 2 \lambda_1 \nu_u + \frac{2 \nu \mu_1}{\mu_2} \Bigl ((\lambda_2)_u +  \nu_v + \lambda_2 (\ln f^2)_u \Bigr )=0;
\end{equation*}
\begin{equation*}\label{E:integrCondIsotr5_case1}
\frac{2 f f_{uv} - 2 f_u f_v}{f^4} + \lambda_1 \lambda_2 + \mu_1 \mu_2 - \nu^2 =0;
\end{equation*}
\begin{equation*}\label{E:integrCondIsotr6_case1}
\begin{array}{l}
(\mu_1 \lambda_2 - \mu_2 \lambda_1) \left ( f^2 \mu_1 \mu_2 - (\ln f^2)_{uv} \right ) + \nu_{uu} \mu_2 - \nu_{vv} \mu_1 + (\lambda_1)_{uv} \mu_2 - (\lambda_2)_{uv} \mu_1 +\\
+ \, \mu_2 (\lambda_1)_u (\ln f^2)_v -  \mu_1 (\lambda_2)_v (\ln f^2)_u - \frac{\mu_2(\mu_1)_u}{\mu_1} \left ( \nu_u + (\lambda_1)_v + \lambda_1 (\ln f^2)_v \right ) +\\ 
+ \, \frac{\mu_1 (\mu_2)_v}{\mu_2} \left ( (\lambda_2)_u + \nu_v +  \lambda_2 (\ln f^2)_u \right ) = 0.
\end{array}
\end{equation*}

\vskip 1mm
Now we can give the fundamental theorem:

\begin{theorem}\label{Th:FundTh6f}
Let $f(u,v) > 0$, $\nu(u,v)$, $\lambda_1(u,v)$, $\mu_1(u,v)$, $\lambda_2(u,v)$, $\mu_2(u,v)$, $\mu_1 \mu_2 \neq 0$ be six smooth functions, 
 defined in a domain
${\mathcal D}, \,\, {\mathcal D} \subset {\R}^2$, and satisfying the conditions
\begin{equation} \label{E:FundTh6f} 
\begin{array}{l}
\vspace{2mm}
\text{(i)} \;\; (\lambda^2_2+\mu_2^2)_u + (\ln f^4)_u (\lambda^2_2+\mu_2^2) + 2 \lambda_2 \nu_v + \frac{2 \nu \mu_2}{\mu_1} \Bigl ( \nu_u + (\lambda_1)_v + \lambda_1 (\ln f^2)_v \Bigr )=0;
\\
\vspace{2mm}
\text{(ii)} \;\; (\lambda_1^2+\mu_1^2)_v + (\ln f^4)_v (\lambda^2_1+\mu_1^2) + 2 \lambda_1 \nu_u + \frac{2 \nu \mu_1}{\mu_2} \Bigl ((\lambda_2)_u +  \nu_v + \lambda_2 (\ln f^2)_u \Bigr )=0;
\\
\vspace{2mm}
\text{(iii)} \;\; \frac{2 f f_{uv} - 2 f_u f_v}{f^4} + \lambda_1 \lambda_2 + \mu_1 \mu_2 - \nu^2 =0;
\\
\text{(iv)} \;\;
\begin{array}{l}
(\mu_1 \lambda_2 - \mu_2 \lambda_1) \left ( f^2 \mu_1 \mu_2 - (\ln f^2)_{uv} \right ) + \nu_{uu} \mu_2 - \nu_{vv} \mu_1 + (\lambda_1)_{uv} \mu_2 - (\lambda_2)_{uv} \mu_1 +\\
+ \,  \mu_2 (\lambda_1)_u (\ln f^2)_v - \mu_1 (\lambda_2)_v (\ln f^2)_u - \frac{\mu_2(\mu_1)_u}{\mu_1} \left ( \nu_u + (\lambda_1)_v + \lambda_1 (\ln f^2)_v \right ) +\\ 
+ \, \frac{\mu_1 (\mu_2)_v}{\mu_2} \left ( (\lambda_2)_u + \nu_v +  \lambda_2 (\ln f^2)_u \right ) = 0.
\end{array}
\end{array} 
\end{equation}
If $\{x_0,  y_0,  (n_1)_0, (n_2)_0\}$ is a pseudo-orthonormal frame at
a point $p_0 \in \R^4_1$, then there exists a subdomain ${\mathcal D}_0 \subset {\mathcal D}$
and a unique timelike surface of first type
$\M^2: z = z(u,v), \,\, (u,v) \in {\mathcal D}_0$, parametrized by isotropic parameters, such that $\M^2$ passes through $p_0$, $\{x_0, y_0,  (n_1)_0, (n_2)_0\}$ is the geometric
frame of $\M^2$ at $p_0$. 
\end{theorem}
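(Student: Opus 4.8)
The plan is to prove this as a standard Bonnet-type existence-and-uniqueness result by reformulating the derivative formulas \eqref{E:DerivFormIsotr} and \eqref{E:DerivNormFormIsotr} as a first-order linear PDE system for the $\R^4_1$-valued frame $\Phi = (x, y, n_1, n_2)$ and then invoking the Frobenius (integrability) theorem. Concretely, since $x = \frac{1}{f}\partial_u$ and $y = \frac{1}{f}\partial_v$, I would first define $\gamma_1 = \frac{f_u}{f^2}$, $\gamma_2 = \frac{f_v}{f^2}$ and define $\beta_1, \beta_2$ by the explicit formulas in \eqref{E:Eq-4} (equivalently the boxed formulas just below it); these are legitimate smooth functions since $\mu_1\mu_2 \neq 0$. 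Then I rewrite \eqref{E:DerivFormIsotr}--\eqref{E:DerivNormFormIsotr} in terms of $\partial_u$ and $\partial_v$ as
\begin{equation*}
\partial_u \Phi = \Phi\, \mathcal{U}, \qquad \partial_v \Phi = \Phi\, \mathcal{V},
\end{equation*}
where $\mathcal{U} = f\,\mathcal{A}$, $\mathcal{V} = f\,\mathcal{B}$, and $\mathcal{A}, \mathcal{B}$ are the $4\times 4$ coefficient matrices read off from the derivative formulas (with entries built from $\gamma_i, \lambda_i, \mu_i, \nu, \beta_i$). Treating $\Phi$ as a matrix whose columns are the frame vectors, this is an overdetermined linear system, and Frobenius says it has a (unique, given initial data $\Phi(p_0) = \Phi_0$) solution on a simply connected subdomain $\mathcal{D}_0 \ni p_0$ precisely when the compatibility condition $\partial_v \mathcal{U} - \partial_u \mathcal{V} + \mathcal{U}\mathcal{V} - \mathcal{V}\mathcal{U} = 0$ holds.

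The key step is therefore to verify that this matrix compatibility identity is equivalent to the hypotheses. I would show that $\partial_v \mathcal{U} - \partial_u \mathcal{V} + [\mathcal{U},\mathcal{V}]$, expanded entry by entry, reproduces exactly the six integrability conditions \eqref{E:integrCondIsotr1_2}--\eqref{E:integrCondIsotr6_2}. Two of these, \eqref{E:integrCondIsotr1_2} and \eqref{E:integrCondIsotr2_2}, hold identically by the very definition of $\beta_1, \beta_2$ in \eqref{E:Eq-4}. The remaining four, \eqref{E:integrCondIsotr3_2}--\eqref{E:integrCondIsotr6_2}, become precisely conditions (i)--(iv) after substituting the formulas for $\beta_1, \beta_2$ and converting $x, y$ to $\frac{1}{f}\partial_u, \frac{1}{f}\partial_v$ and $\gamma_i$ to logarithmic derivatives of $f$ — this is exactly the computation carried out just before the theorem statement to derive the displayed equations labelled (i)--(iv). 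Hence, under hypotheses (i)--(iv), Frobenius produces a unique frame field $\Phi = (x, y, n_1, n_2)$ with $\Phi(p_0) = (x_0, y_0, (n_1)_0, (n_2)_0)$.

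Next I would check that the solution frame remains pseudo-orthonormal, i.e. the Gram matrix $\mathcal{G}$ of $\Phi$ (with respect to $\langle\cdot,\cdot\rangle$) stays equal to its initial value
\begin{equation*}
\mathcal{G}_0 = \begin{pmatrix} 0 & -1 & 0 & 0 \\ -1 & 0 & 0 & 0 \\ 0 & 0 & 1 & 0 \\ 0 & 0 & 0 & 1 \end{pmatrix}.
\end{equation*}
For this I differentiate $\mathcal{G} = \Phi^T J \Phi$ (with $J = \mathrm{diag}(1,1,1,-1)$) in $u$ and $v$, obtaining $\partial_u \mathcal{G} = \mathcal{U}^T \mathcal{G} + \mathcal{G}\,\mathcal{U}$ and similarly in $v$; one verifies that $\mathcal{G} = \mathcal{G}_0$ solves this linear ODE system because the matrices $\mathcal{A}, \mathcal{B}$ were constructed so that the metric-compatibility relations hold (the derivative formulas \eqref{E:DerivFormIsotr}--\eqref{E:DerivNormFormIsotr} are consistent with $\langle x,x\rangle = \langle y,y\rangle = 0$, $\langle x,y\rangle = -1$, $\langle n_i, n_j\rangle = \delta_{ij}$, $\langle x, n_i\rangle = \langle y, n_i\rangle = 0$), and by uniqueness of ODE solutions $\mathcal{G} \equiv \mathcal{G}_0$. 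So $\{x, y, n_1, n_2\}$ is a pseudo-orthonormal frame everywhere on $\mathcal{D}_0$.

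Finally I recover the surface. The equations $z_u = f x$, $z_v = f y$ form a second overdetermined system for the $\R^4_1$-valued function $z$; its compatibility condition $\partial_v(f x) = \partial_u(f y)$ reads $f_v x + f\,\partial_v x = f_u y + f\,\partial_u y$, and substituting $\partial_u y = f(-\gamma_1 y - \nu n_1)$, $\partial_v x = f(-\gamma_2 x - \nu n_1)$ from the rewritten formulas gives $f_v x - f^2\gamma_2 x - f^2 \nu n_1 = f_u y - f^2 \gamma_1 y - f^2\nu n_1$, which holds identically since $f^2\gamma_1 = f_u$ and $f^2\gamma_2 = f_v$. Hence $z$ exists and is unique once we fix $z(p_0) = p_0$. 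One checks $z$ is an immersion (its differential sends $\partial_u, \partial_v$ to $f x, f y$, which are linearly independent since $\langle fx, fy\rangle = -f^2 \neq 0$), that the induced metric is $g = -f^2(du\otimes dv + dv\otimes du)$ so $(u,v)$ are isotropic parameters and $\M^2$ is timelike, and — by comparing the Gauss--Weingarten formulas of the constructed $\M^2$ with \eqref{E:DerivFormIsotr}--\eqref{E:DerivNormFormIsotr} — that $n_1$ is collinear with the mean curvature vector $H = -\sigma(x,y) = \nu n_1$, so $\{x,y,n_1,n_2\}$ is indeed the geometric frame field and the surface is of first type since $\mu_1\mu_2 \neq 0$. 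Uniqueness up to nothing (given the initial frame at $p_0$) follows from the uniqueness in each Frobenius/ODE step; allowing the initial frame to vary over all pseudo-orthonormal frames at all points of $\R^4_1$ yields uniqueness up to a motion of $\R^4_1$. The main obstacle is the bookkeeping in the central computation — showing the matrix curvature identity $\partial_v\mathcal{U} - \partial_u\mathcal{V} + [\mathcal{U},\mathcal{V}] = 0$ unpacks exactly into \eqref{E:integrCondIsotr1_2}--\eqref{E:integrCondIsotr6_2} and thence into (i)--(iv) — but this is precisely the reverse of the derivation already performed before the theorem, so no genuinely new difficulty arises.
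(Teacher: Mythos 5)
Your proposal is correct and follows essentially the same route as the paper: define $\gamma_i,\beta_i$ from the data, solve the linear frame system $\mathcal{W}_u=\mathcal{A}\mathcal{W}$, $\mathcal{W}_v=\mathcal{B}\mathcal{W}$ via Frobenius (the compatibility conditions being exactly the integrability conditions, two of which hold by the definition of $\beta_1,\beta_2$ and the rest being (i)--(iv)), show the frame stays pseudo-orthonormal by a linear-uniqueness argument (your Gram-matrix ODE is just a repackaging of the paper's ten functions $h_1,\dots,h_{10}$), and then integrate $z_u=fx$, $z_v=fy$. Your final paragraph verifying that the constructed surface is of first type with the prescribed geometric frame is a bit more explicit than the paper's, but adds no new idea.
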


\begin{proof}
Using the given functions we define  $\gamma_1 =  \ds \frac{f_u}{f^2}$, $\gamma_2 =  \ds \frac{f_v}{f^2}$, $\beta_1 = \ds \frac{(\lambda_2)_u + \nu_v + \lambda_2 (\ln f^2)_u}{f\mu_2}$, $\beta_2 = \ds \frac{\nu_u + (\lambda_1)_v +  \lambda_1 (\ln f^2)_v}{f\mu_1}$ and consider the following system of partial differential equations for the unknown vector
functions $x = x(u,v), \, y = y(u,v), \, n_1 = n_1(u,v), \,n_2 = n_2(u,v)$
in $\R^4_1$:
\begin{equation}
\begin{array}{ll} \label{E:Th6fEqSystem1}
\vspace{2mm}
x_u = f \left(\gamma_1\, x + \lambda_1\, n_1  + \mu_1\, n_2\right)
& \quad x_v = f \left( -\gamma_2\, x - \nu\, n_1\right)\\
\vspace{2mm}
y_u = f \left(- \gamma_1\, y - \nu\, n_1 \right)
& \quad y_v = f \left( \gamma_2\, y + \lambda_2 \, n_1 + \mu_2 \, n_2 \right)\\
\vspace{2mm}
(n_1)_u = f \left( - \nu\, x + \lambda_1\, y + \beta_1 \, n_2\right) &
\quad (n_1)_v =  f \left( \lambda_2\, x - \nu\, y +\beta_2 \, n_2\right) \\
\vspace{2mm}
(n_2)_u = f \left(  \mu_1\, y - \beta_1 \, n_1 \right) &
\quad (n_2)_v =  f \left(  \mu_2\, x - \beta_2 \, n_1\right)
\end{array}
\end{equation}
For convenience, we denote
$$\mathcal{W} =
\left(%
\begin{array}{c}
\vspace{2mm}
  x \\
  \vspace{2mm}
  y \\
\vspace{2mm}
  n_1 \\
  n_2 \\
\end{array}%
\right)\!\!; \;\;
\mathcal{A} = f \left(%
\begin{array}{cccc}
\vspace{2mm}
  \gamma_1 & 0 & \lambda_1  & \mu_1 \\
\vspace{2mm}
  0 & -\gamma_1 & -\nu & 0 \\
\vspace{2mm}
  -\nu & \lambda_1 & 0 & \beta_1 \\
  0 & \mu_1 & -\beta_1 & 0 \\
\end{array}%
\right)\!\!; \;\;
\mathcal{B} = f   
\left(%
\begin{array}{cccc}
\vspace{2mm}
  -\gamma_2 & 0 & -\nu & 0 \\
\vspace{2mm}
  0 & \gamma_2 &  \lambda_2 &  \mu_2 \\
\vspace{2mm}
   \lambda_2 & -\nu & 0 & \beta_2 \\
   \mu_2 & 0 & -\beta_2 & 0 \\
\end{array}%
\right)\!.$$
Then, system \eqref{E:Th6fEqSystem1} can be written in the following matrix form:
\begin{equation}
\begin{array}{l} \label{E:Th6fEqMatrixSystem1}
\vspace{2mm}
\mathcal{W}_u = \mathcal{A}\,\mathcal{W},\\
\vspace{2mm} 
\mathcal{W}_v = \mathcal{B}\,\mathcal{W}.
\end{array}
\end{equation}
The integrability conditions of system \eqref{E:Th6fEqMatrixSystem1} are $\mathcal{W}_{uv} = \mathcal{W}_{vu}$,
i.e.
\begin{equation} \label{E:Th6fEqMatrixSystemElements1}
\displaystyle{\frac{\partial a_i^k}{\partial v} - \frac{\partial b_i^k}{\partial u}
+ \sum_{j=1}^{4}(a_i^j\,b_j^k - b_i^j\,a_j^k) = 0, \quad i,k = 1,
\dots, 4,}
\end{equation}
 where by $a_i^j$ and $b_i^j$ ($i, j = 1,\dots, 4$) we denote  the
elements of the matrices $\mathcal{A}$ and $\mathcal{B}$, respectively. Taking into consideration the conditions given in  \eqref{E:FundTh6f}, one can check that
 equalities \eqref{E:Th6fEqMatrixSystemElements1} are fulfilled. Hence, there exists a subdomain
$\mathcal{D}_1 \subset \mathcal{D}$ and unique vector functions $x
= x(u,v), \, y = y(u,v), \,n_1 = n_1(u,v)$, $n_2 = n_2(u,v), \,\, (u,v)
\in \mathcal{D}_1$, which satisfy system \eqref{E:Th6fEqSystem1} and the initial conditions
$$x(u_0,v_0) = x_0, \quad y(u_0,v_0) = y_0, \quad n_1(u_0,v_0) = (n_1)_0, \quad n_2(u_0,v_0) = (n_2)_0.$$
In order to prove that the vector functions $x(u,v), \, y(u,v), \,n_1(u,v), \,n_2(u,v)$ form
a pseudo-orthonormal frame in $\R^4_1$ for each $(u,v) \in
\mathcal{D}_1$, we consider the  functions:
$$\begin{array}{lllll}
\vspace{2mm}
  h_1 = \langle x,x \rangle; & \quad h_3 = \langle n_1, n_1 \rangle - 1;  & \quad h_5 =
  \langle x,y \rangle +1; & \quad  h_7 =   \langle x,n_2 \rangle;  & \quad h_9 = \langle y,n_2 \rangle; \\  
\vspace{2mm}
  h_2 = \langle y, y \rangle; & \quad h_4 = \langle n_2,n_2 \rangle - 1; & \quad  h_6 = \langle x,n_1 \rangle; & \quad  h_8 = \langle y,n_1 \rangle; 
 & \quad  h_{10} = \langle n_1,n_2 \rangle; 
\end{array}$$
defined for $(u,v) \in \mathcal{D}_1$. Having in mind  that $x(u,v), \,
y(u,v), \,n_1(u,v), \,n_2(u,v)$ satisfy \eqref{E:Th6fEqSystem1}, we obtain  the system
\begin{equation} \label{E:Th6fEqLinearSystem1}
\displaystyle{\frac{\partial h_i}{\partial u} = k_i^j \, h_j}, \qquad 
\displaystyle{\frac{\partial h_i}{\partial v} = m_i^j \, h_j}; \qquad i = 1, \dots, 10,
\end{equation}
where $k_i^j, m_i^j, \,\, i,j = 1, \dots, 10$ are
functions of $(u,v) \in \mathcal{D}_1$.  System \eqref{E:Th6fEqLinearSystem1} is a linear
system of partial differential equations for the functions
$h_i(u,v)$, satisfying the conditions $h_i(u_0,v_0) = 0$ for all $i = 1,
\dots, 10$, since $\{x_0, \, y_0, \, (n_1)_0,\, (n_2)_0\}$ is a pseudo-orthonormal frame. Therefore, $h_i(u,v) = 0, \,\,i = 1, \dots, 10$ for
each $(u,v) \in \mathcal{D}_1$. Hence, the vector functions
$x(u,v)$, $y(u,v)$, $n_1(u,v)$, $n_2(u,v)$ form a pseudo-orthonormal frame
in $\E^4_1$ for each $(u,v) \in \mathcal{D}_1$.

Now, we consider the following system of PDEs for the vector function
$z(u,v)$:
\begin{equation} \label{E:Th6fEqSystemForZ1}
\begin{array}{lll}
\vspace{2mm}
z_u = f \, x\\
 z_v = f \, y
\end{array}
\end{equation}
Using \eqref{E:FundTh6f} and \eqref{E:Th6fEqSystem1}, one can check that the integrability
conditions $z_{uv} = z_{vu}$ of system \eqref{E:Th6fEqSystemForZ1}
 are fulfilled. Consequently,  there exists a subdomain  $\mathcal{D}_0 \subset \mathcal{D}_1$ and
a unique vector function $z = z(u,v)$, $(u,v) \in \mathcal{D}_0$, satisfying $z(u_0, v_0) = p_0$.

Finally, we consider the surface $\M^2: z = z(u,v), \,\, (u,v) \in
\mathcal{D}_0$. Obviously, $\M^2$ is a timelike surface in $\R^4_1$ parametrized by isotropic parameters $(u,v)$, since $\langle z_u, z_u \rangle = 0$, $\langle z_v, z_v \rangle = 0$, $\langle z_u, z_v \rangle = - f^2(u,v)$. 

\end{proof}

\vskip 1mm
\textbf{Case II}. $\mu_1 \neq 0$ and $\mu_2 = 0$ in a sub-domain.
\vskip 1mm

In this case, we have the following subcases:
$$
\begin{array}{l} 
\vspace{2mm}
(a) \hspace{3mm} \lambda_2 \neq 0;  \\ 
(b) \hspace{3mm} \lambda_2 = 0.
\end{array}
$$

\vskip 1mm 
\textbf{Subcase II  (a)}:  $\lambda_2 \neq 0$.
\vskip 1mm

In this subcase we call the surface a timelike surface of \textit{second type}.

\vskip 1mm
Now, using the integrability conditions under the assumptions $\mu_2 = 0$  and $\lambda_2 \neq 0$, we obtain the following expressions for the functions $\beta_1$ and $\beta_2$:
$$
\beta_2 = \frac{1}{\mu_1 } (x(\nu)+y(\lambda_1) + 2 \gamma_2 \lambda_1) = \frac{1}{f \mu_1 } (\nu_u + (\lambda_1)_v + (\ln f^2)_v \lambda_1);
$$
$$
\beta_1 = -\frac{\nu \beta_2}{\lambda_2 } = - \frac{\nu}{f \mu_1 \lambda_2}(\nu_u + (\lambda_1)_v +  (\ln f^2)_v \lambda_1);
$$
and the next four equations:
$$
(\lambda_2)_u + \nu_v + (\ln f^2)_u \lambda_2 = 0;
$$
$$
\frac{2 f f_{uv} - 2 f_u f_v}{f^4} - (\nu^2 - \lambda_1 \lambda_2) = 0;
$$ 
$$
(\mu_1)_v + (\ln f^2)_v \mu_1 - \frac{\nu^2 -\lambda_1 \lambda_2}{\lambda_2 \mu_1} (\nu_u + (\lambda_1)_v + (\ln f^2)_v \lambda_1) = 0;
$$
$$
\begin{array}{l}
 \lambda_2 \Bigl (\! \nu_{uu} \!+ \!(\lambda_1)_{uv}\! +\! (\lambda_1)_u (\ln f^2)_v\! +\! \lambda_1  (\ln f^2)_{uv} \!\Bigr )  
\!+ \!\nu \Bigl (\! \nu_{uv} \!+ \!(\lambda_1)_{vv} \!+ \! (\lambda_1)_v (\ln f^2)_v \!+ \!\lambda_1  (\ln f^2)_{vv} \!\Bigr ) + \\ 
+ f^2 \mu_1^2 \lambda_2^2  - \Bigl ( \nu_u + (\lambda_1)_v + \lambda_1 (\ln f^2)_v \Bigr )\Bigl (\lambda_2 (\ln |\mu_1|)_u +\nu_v - \nu (\ln |\mu_1|)_v -\nu (\ln |\lambda_2|)_v \Bigr ) = 0.
\end{array}
$$

\vskip 1mm
Finally, we can formulate the fundamental theorem in this subcase:

\begin{theorem}\label{Th:FundTh6f3}
Let $f(u,v) > 0$, $\nu(u,v)$, $\lambda_1(u,v)$, $\mu_1(u,v)$, $\lambda_2(u,v) \neq 0$, be five smooth functions, 
 defined in a domain
${\mathcal D}, \,\, {\mathcal D} \subset {\R}^2$, and satisfying the conditions
\begin{equation*} \label{E:FundTh6f3} 
\begin{array}{l}
\vspace{2mm}
\text{(i)} \;\; (\lambda_2)_u + \nu_v + (\ln f^2)_u \lambda_2 = 0;
\\
\vspace{2mm}
\text{(ii)} \;\; (\mu_1)_v + (\ln f^2)_v \mu_1 - \frac{\nu^2 -\lambda_1 \lambda_2}{\lambda_2 \mu_1} (\nu_u + (\lambda_1)_v + (\ln f^2)_v \lambda_1) = 0;
\\
\vspace{2mm}
\text{(iii)} \;\; \frac{2 f f_{uv} - 2 f_u f_v}{f^4} - (\nu^2 - \lambda_1 \lambda_2) = 0; 
\\
\text{(iv)} \;\;
\begin{array}{l}
 \lambda_2 \Bigl (\! \nu_{uu} \!+ \!(\lambda_1)_{uv}\! +\! (\lambda_1)_u (\ln f^2)_v\! +\! \lambda_1  (\ln f^2)_{uv} \!\Bigr ) \! 
+ \!\nu \Bigl (\! \nu_{uv} \!+ \!(\lambda_1)_{vv} \!+ \! (\lambda_1)_v (\ln f^2)_v \!+ \!\lambda_1  (\ln f^2)_{vv} \!\Bigr ) + \\ 
+ f^2 \mu_1^2 \lambda_2^2  - \Bigl ( \nu_u + (\lambda_1)_v + \lambda_1 (\ln f^2)_v \Bigr )\Bigl (\lambda_2 (\ln |\mu_1|)_u +\nu_v - \nu (\ln |\mu_1|)_v -\nu (\ln |\lambda_2|)_v \Bigr ) = 0.
\end{array}
\end{array} 
\end{equation*}
If $\{x_0,  y_0,  (n_1)_0, (n_2)_0\}$ is a pseudo-orthonormal frame at
a point $p_0 \in \R^4_1$, then there exists a subdomain ${\mathcal D}_0 \subset {\mathcal D}$
and a unique timelike surface of second type
$\M^2: z = z(u,v), \,\, (u,v) \in {\mathcal D}_0$, parametrized by isotropic parameters, such that $\M^2$ passes through $p_0$ and $\{x_0,  y_0,  (n_1)_0, (n_2)_0\}$ is the geometric
frame of $\M^2$ at  $p_0$.
\end{theorem}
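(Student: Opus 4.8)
The plan is to repeat, with the obvious modifications forced by $\mu_2\equiv 0$, the scheme used in the proof of Theorem \ref{Th:FundTh6f}. From the five given functions $f,\nu,\lambda_1,\mu_1,\lambda_2$ I would first define the auxiliary functions $\gamma_1=\ds\frac{f_u}{f^2}$, $\gamma_2=\ds\frac{f_v}{f^2}$, $\beta_2=\ds\frac{\nu_u+(\lambda_1)_v+(\ln f^2)_v\lambda_1}{f\mu_1}$ and $\beta_1=-\ds\frac{\nu\beta_2}{\lambda_2}$, i.e. exactly the expressions that the integrability conditions force in Subcase II(a); this is legitimate on the subdomain in question since there $\lambda_2\mu_1\neq 0$. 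Then I would write down the first-order linear system of PDEs for the $\R^4_1$-valued unknowns $x,y,n_1,n_2$, namely the analogue of \eqref{E:Th6fEqSystem1} with $\mu_2$ set to zero (so the last equation reads $(n_2)_v=f(\mu_1\cdot 0\cdot x-\beta_2 n_1)=-f\beta_2 n_1$ and the $\widetilde\nabla_y y$ formula has no $n_2$-component), and put it in matrix form $\mathcal{W}_u=\mathcal{A}\mathcal{W}$, $\mathcal{W}_v=\mathcal{B}\mathcal{W}$ with $\mathcal{W}=(x,y,n_1,n_2)$ a column of vectors and $\mathcal{A},\mathcal{B}$ the corresponding $4\times 4$ matrices built from $f,\gamma_1,\gamma_2,\nu,\lambda_1,\lambda_2,\mu_1,\beta_1,\beta_2$.

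The computational heart of the argument is to verify the compatibility relations $\partial_v a_i^k-\partial_u b_i^k+\sum_j(a_i^j b_j^k-b_i^j a_j^k)=0$ for $i,k=1,\dots,4$. After substituting the definitions of $\gamma_1,\gamma_2,\beta_1,\beta_2$, each of these sixteen scalar equations must reduce either to an identity or to one of the four hypotheses: the surviving analogue of \eqref{E:integrCondIsotr1_2} with $\mu_2=0$ is precisely condition (i); the analogue of \eqref{E:integrCondIsotr5_2} with $\mu_2=0$ is condition (iii); the analogue of \eqref{E:integrCondIsotr4_2}, after inserting $\beta_1=-\nu\beta_2/\lambda_2$ and the formula for $\beta_2$, becomes condition (ii); the analogue of \eqref{E:integrCondIsotr6_2} with $\mu_2=0$ becomes condition (iv); and the analogues of \eqref{E:integrCondIsotr2_2} and \eqref{E:integrCondIsotr3_2} are satisfied automatically because $\beta_2$ and $\beta_1$ were defined exactly so as to solve them (the latter uses $\beta_1\lambda_2+\nu\beta_2=0$). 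By the classical integrability theorem for overdetermined linear first-order systems, there is then a subdomain $\mathcal{D}_1\subset\mathcal{D}$ containing $(u_0,v_0)$ and unique smooth solutions $x(u,v),y(u,v),n_1(u,v),n_2(u,v)$ with $x(u_0,v_0)=x_0$, $y(u_0,v_0)=y_0$, $n_1(u_0,v_0)=(n_1)_0$, $n_2(u_0,v_0)=(n_2)_0$.

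To show $\{x,y,n_1,n_2\}$ stays pseudo-orthonormal, I would introduce the ten inner-product functions $h_1,\dots,h_{10}$ exactly as in the proof of Theorem \ref{Th:FundTh6f}, differentiate them via the frame system, and check that they satisfy a homogeneous linear first-order PDE system $\partial_u h_i=k_i^j h_j$, $\partial_v h_i=m_i^j h_j$ with $h_i(u_0,v_0)=0$; uniqueness then forces $h_i\equiv 0$ on $\mathcal{D}_1$. Next, the system $z_u=f x$, $z_v=f y$ is compatible, since $z_{uv}-z_{vu}$ reduces via the first two rows of the frame system to a combination of $\gamma_1,\gamma_2$ that vanishes by their definition, so it has a unique solution $z(u,v)$ on a subdomain $\mathcal{D}_0\subset\mathcal{D}_1$ with $z(u_0,v_0)=p_0$. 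The surface $\M^2\colon z=z(u,v)$ is timelike and isotropically parametrized because $\langle z_u,z_u\rangle=\langle z_v,z_v\rangle=0$ and $\langle z_u,z_v\rangle=-f^2$; reading off its Gauss and Weingarten formulas recovers \eqref{E:DerivFormIsotr} and \eqref{E:DerivNormFormIsotr}, so $\{x,y,n_1,n_2\}$ is the geometric frame of $\M^2$ at $p_0$, while $\mu_2=0$ and $\lambda_2\neq 0$ exhibit it as a surface of second type; uniqueness of all the ODE/PDE solutions involved gives the uniqueness of $\M^2$.

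The main obstacle is the integrability verification in the second step. In contrast to Case I, here $\mu_2\equiv 0$ degenerates several of the original conditions \eqref{E:integrCondIsotr1_2}--\eqref{E:integrCondIsotr6_2}, and one must use $\lambda_2\neq 0$ together with the constraint $\beta_1\lambda_2+\nu\beta_2=0$ to correctly extract the reduced equations — in particular the bulky condition (iv) — and to recognize that the two remaining relations are consumed by the definitions of $\beta_1,\beta_2$ rather than producing new equations. One must also keep track throughout that every manipulation involves division by $\lambda_2$ and $\mu_1$, so all steps are valid only on the subdomain where $\lambda_2\mu_1\neq 0$, which is exactly the region covered by the theorem.
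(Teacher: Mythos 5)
Your proposal is correct and follows essentially the same route as the paper: the paper's proof defines $\beta_1=-\frac{\nu}{f\mu_1\lambda_2}\bigl(\nu_u+(\lambda_1)_v+(\ln f^2)_v\lambda_1\bigr)$, $\beta_2=\frac{1}{f\mu_1}\bigl(\nu_u+(\lambda_1)_v+(\ln f^2)_v\lambda_1\bigr)$, writes the frame system \eqref{E:Th6f3EqSystem1} with $\mu_2=0$, and then defers to the steps of Theorem \ref{Th:FundTh6f}, which are exactly the integrability check, the $h_i$-argument, and the integration of $z_u=fx$, $z_v=fy$ that you describe. Your identification of which reduced integrability conditions become hypotheses (i)--(iv) and which are absorbed into the definitions of $\beta_1,\beta_2$ is consistent with the reductions the paper records just before the theorem statement.
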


\begin{proof}
Let us denote $\beta_1 = - \frac{\nu}{f \mu_1 \lambda_2}(\nu_u + (\lambda_1)_v + (\ln f^2)_v \lambda_1)$, 
$\beta_2 =\frac{1}{f \mu_1 } (\nu_u + (\lambda_1)_v +  (\ln f^2)_v \lambda_1)$, $\gamma_1 =  \ds \frac{f_u}{f^2}$, $\gamma_2 =  \ds \frac{f_v}{f^2}$, 
 and consider the following system of partial differential equations for the unknown vector
functions $x = x(u,v), \, y = y(u,v), \, n_1 = n_1(u,v), \,n_2 = n_2(u,v)$
in $\R^4_1$:
\begin{equation}
\begin{array}{ll} \label{E:Th6f3EqSystem1}
\vspace{2mm}
x_u = f \left(\gamma_1\, x + \lambda_1\, n_1  +  \mu_1\, n_2\right)
& \quad x_v = f \left( -\gamma_2\, x - \nu\, n_1\right)\\
\vspace{2mm}
y_u = f \left(- \gamma_1\, y - \nu\, n_1 \right)
& \quad y_v = f \left( \gamma_2\, y + \lambda_2 \, n_1  \right)\\
\vspace{2mm}
(n_1)_u = f \left( - \nu\, x + \lambda_1\, y + \beta_1 \, n_2\right) &
\quad (n_1)_v =  f \left( \lambda_2\, x - \nu\, y +\beta_2 \, n_2\right) \\
\vspace{2mm}
(n_2)_u = f \left(   \mu_1 \, y - \beta_1 \, n_1 \right) &
\quad (n_2)_v =  f \left(  - \beta_2 \, n_1\right)
\end{array}
\end{equation}
We denote
$$\mathcal{W} =
\left(%
\begin{array}{c}
\vspace{2mm}
  x \\
  \vspace{2mm}
  y \\
  \vspace{2mm}
  n_1 \\
  \vspace{2mm}
  n_2 \\
\end{array}%
\right)\!\!; \;\;
\mathcal{A} = f \left(%
\begin{array}{cccc}
\vspace{2mm}
  \gamma_1 & 0 & \lambda_1  &  \mu_1 \\
  \vspace{2mm}
  0 & -\gamma_1 & -\nu & 0 \\
\vspace{2mm}
  -\nu & \lambda_1 & 0 & \beta_1 \\
\vspace{2mm}
  0 &  \mu_1 & -\beta_1 & 0 \\
\end{array}%
\right)\!\!; \;\;
\mathcal{B} = f   
\left(%
\begin{array}{cccc}
\vspace{2mm}
  -\gamma_2 & 0 & -\nu & 0 \\
\vspace{2mm}
  0 & \gamma_2 &  \lambda_2 &  0 \\
\vspace{2mm}
   \lambda_2 & -\nu & 0 & \beta_2 \\
\vspace{2mm}
   0 & 0 & -\beta_2 & 0 \\
\end{array}%
\right)\!$$
and rewrite system \eqref{E:Th6f3EqSystem1} in the matrix form:
\begin{equation*}
\begin{array}{l} \label{E:Th6f3EqMatrixSystem1}
\vspace{2mm}
\mathcal{W}_u = \mathcal{A}\,\mathcal{W},\\
\vspace{2mm}
\mathcal{W}_v = \mathcal{B}\,\mathcal{W}.
\end{array}
\end{equation*}
Further, the proof of the theorem follows the steps in the proof of Theorem \ref{Th:FundTh6f}.

\end{proof}

\vskip 1mm 
\textbf{Subcase II  (b)}:  $\lambda_2 = 0$.
\vskip 1mm

We call the surfaces satisfying $\mu_2 =0$  and  $\lambda_2 = 0$  timelike surfaces of \textit{third type}.
\vskip 1mm

In this subcase,  under the assumption $\lambda_2 = \mu_2 =0$ from the integrability conditions we obtain that $\nu = \nu(u)$, $\beta_2 =0$, 
$\beta_1 = -\frac{1}{\nu f}\left((\mu_1)_v + \mu_1 (\ln f^2)_v\right)$ and the following three equations:
\begin{equation}\label{E:integrCondIsotr2_1_2_1}
\nu_u + (\lambda_1)_v + \lambda_1 (\ln f^2)_v  = 0;
\end{equation}
$$
(\mu_1)_{vv} + (\mu_1)_v (\ln f^2)_{v}+ \mu_1 (\ln f^2)_{vv} = 0;
$$
$$
\frac{2f f_{uv} - 2f_u f_v}{f^4} = \nu^2.
$$
The last equality implies that the function $\nu$ is expressed by the function $f$ and its derivatives as follows:
\begin{equation}\label{E:integrCondIsotr2_1_2_1-a}
\nu^2 = f^{-2} \left(\ln f^2\right)_{uv}.
\end{equation}
Hence, the function $f^{-2} \left(\ln f^2\right)_{uv}$ depends only on the parameter $u$, since $\nu = \nu(u)$.
Using \eqref{E:integrCondIsotr2_1_2_1-a} we can write equality \eqref{E:integrCondIsotr2_1_2_1} in the form:
\begin{equation}\label{E:integrCondIsotr2_1_2_1-l}
(\lambda_1)_v + \lambda_1 (\ln f^2)_v   + \left(f^{-1} \sqrt{\left(\ln f^2\right)_{uv}}\right)_u= 0.
\end{equation}

Now, we can give the fundamental theorem in this subcase.

\begin{theorem}\label{Th:FundTh6f2}
Let $f(u,v) >0$, $\lambda_1(u,v)$ and $\mu_1(u,v)$ be three smooth functions, 
 defined in a domain
${\mathcal D}, \,\, {\mathcal D} \subset {\R}^2$, and satisfying the conditions
\begin{equation*} \label{E:FundTh6f2} 
\begin{array}{l}
\vspace{2mm}
\text{(i)} \;\; \left(f^{-2} \left(\ln f^2\right)_{uv}\right)_v =0;
\\
\vspace{2mm}
\text{(ii)} \;\; (\lambda_1)_v + \lambda_1 (\ln f^2)_v   + \left(f^{-1} \sqrt{\left(\ln f^2\right)_{uv}}\right)_u= 0;
\\
\vspace{2mm}
\text{(iii)} \;\; (\mu_1)_{vv} + (\mu_1)_v (\ln f^2)_v + \mu_1 (\ln f^2)_{vv} = 0.
\end{array} 
\end{equation*}
If $\{x_0, y_0,  (n_1)_0, (n_2)_0\}$ is a pseudo-orthonormal frame at
a point $p_0 \in \R^4_1$, then there exists a subdomain ${\mathcal D}_0 \subset {\mathcal D}$
and a unique timelike surface of third type
$\M^2: z = z(u,v), \, (u,v) \in {\mathcal D}_0$,  parametrized by isotropic parameters,  such that $\M^2$ passes through $p_0$, $\{x_0,  y_0,  (n_1)_0, (n_2)_0\}$ is the geometric
frame of $\M^2$ at the point $p_0$. 
\end{theorem}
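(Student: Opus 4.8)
\textit{Proof proposal.} The proof will follow, step by step, the scheme of the proof of Theorem~\ref{Th:FundTh6f}, once the linear system is reduced according to the identities that hold for third‑type surfaces. First I would manufacture the remaining frame coefficients from the given data: set $\gamma_1 = \ds\frac{f_u}{f^2}$, $\gamma_2 = \ds\frac{f_v}{f^2}$, and — using condition (i), which guarantees that $f^{-2}(\ln f^2)_{uv}$ depends on $u$ only (and tacitly that $(\ln f^2)_{uv}>0$, so that $\M^2$ be free of minimal points) — put
$$\nu = f^{-1}\sqrt{(\ln f^2)_{uv}}, \qquad \beta_2 = 0, \qquad \beta_1 = -\ds\frac{(\mu_1)_v + \mu_1(\ln f^2)_v}{\nu f},$$
so that $\nu=\nu(u)$ and $\nu^2 = f^{-2}(\ln f^2)_{uv}=\ds\frac{2ff_{uv}-2f_uf_v}{f^4}$. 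These are precisely the expressions forced by the integrability conditions \eqref{E:integrCondIsotr1_2}--\eqref{E:integrCondIsotr6_2} when $\lambda_2=\mu_2=0$, as derived before the statement.

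Next I would write down the linear system $\mathcal{W}_u = \mathcal{A}\,\mathcal{W}$, $\mathcal{W}_v = \mathcal{B}\,\mathcal{W}$ for the unknown vector functions $\mathcal{W}=(x,y,n_1,n_2)^{\mathsf T}$ in $\R^4_1$, where $\mathcal{A}$ and $\mathcal{B}$ are obtained from the matrices in the proof of Theorem~\ref{Th:FundTh6f} by setting $\lambda_2=\mu_2=\beta_2=0$; explicitly
$$\mathcal{A} = f \left(\begin{array}{cccc} \gamma_1 & 0 & \lambda_1 & \mu_1 \\ 0 & -\gamma_1 & -\nu & 0 \\ -\nu & \lambda_1 & 0 & \beta_1 \\ 0 & \mu_1 & -\beta_1 & 0 \end{array}\right), \qquad \mathcal{B} = f \left(\begin{array}{cccc} -\gamma_2 & 0 & -\nu & 0 \\ 0 & \gamma_2 & 0 & 0 \\ 0 & -\nu & 0 & 0 \\ 0 & 0 & 0 & 0 \end{array}\right).$$
The crucial step is to check the compatibility conditions \eqref{E:Th6fEqMatrixSystemElements1}, i.e. $\mathcal{A}_v - \mathcal{B}_u + \mathcal{A}\mathcal{B} - \mathcal{B}\mathcal{A} = 0$. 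Since the matrices already incorporate the relations defining $\gamma_1,\gamma_2,\nu,\beta_1$, most of the sixteen scalar equations are identically satisfied, and the surviving ones reduce exactly to condition (ii) (the vanishing of $\nu_u + (\lambda_1)_v + \lambda_1(\ln f^2)_v$ after substituting $\nu=f^{-1}\sqrt{(\ln f^2)_{uv}}$) and condition (iii) (the vanishing of $\partial_v\!\big((\mu_1)_v+\mu_1(\ln f^2)_v\big)$), while condition (i) is what makes $\nu$ a legitimate function of $u$ alone. This verification — elementary but lengthy — is the only real obstacle; everything else is formal.

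With integrability in hand, the argument repeats that of Theorem~\ref{Th:FundTh6f}: by the Frobenius theorem there is a subdomain $\mathcal{D}_1\subset\mathcal{D}$ and unique vector functions $x,y,n_1,n_2$ solving the system with $x(u_0,v_0)=x_0$, $y(u_0,v_0)=y_0$, $n_1(u_0,v_0)=(n_1)_0$, $n_2(u_0,v_0)=(n_2)_0$ at a fixed $(u_0,v_0)\in\mathcal{D}$. Introducing the ten inner‑product functions $h_1,\dots,h_{10}$ as in that proof, the special shape of $\mathcal{A}$ and $\mathcal{B}$ (metric compatibility of the derivative formulas) forces a closed linear homogeneous system $\partial_u h_i = k_i^j h_j$, $\partial_v h_i = m_i^j h_j$; since all $h_i$ vanish at $(u_0,v_0)$ they vanish on $\mathcal{D}_1$, so $\{x,y,n_1,n_2\}$ is a pseudo‑orthonormal frame there. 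Then I would solve $z_u = f x$, $z_v = f y$ with $z(u_0,v_0)=p_0$; the compatibility $z_{uv}=z_{vu}$ holds because both sides equal $-f^2\nu\, n_1$ by the definition of $\gamma_1,\gamma_2$, producing a unique $z$ on a subdomain $\mathcal{D}_0\subset\mathcal{D}_1$.

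Finally I would verify that $\M^2: z=z(u,v)$, $(u,v)\in\mathcal{D}_0$, is the surface sought: $\langle z_u,z_u\rangle=\langle z_v,z_v\rangle=0$ and $\langle z_u,z_v\rangle=-f^2<0$, so $\M^2$ is timelike and $(u,v)$ are isotropic parameters; $x=z_u/f$, $y=z_v/f$ are its null tangent directions, and $\widetilde{\nabla}_x y = \tfrac1f y_u = -\gamma_1 y - \nu n_1$ gives $\sigma(x,y)=-\nu n_1$, hence $H=\nu n_1$ with $\nu>0$, so $n_1$ is collinear with $H$ and $\{n_1,n_2\}$ is an orthonormal normal frame; moreover $\lambda_2=\langle\widetilde{\nabla}_y y,n_1\rangle=0$, $\mu_2=\langle\widetilde{\nabla}_y y,n_2\rangle=0$ and $\mu_1\neq0$, so $\M^2$ is of third type and $\{x_0,y_0,(n_1)_0,(n_2)_0\}$ is its geometric frame at $p_0$. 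Uniqueness follows because any timelike surface of third type through $p_0$ with these invariants and this frame at $p_0$ has, by the derivative formulas of Section~2 together with the reductions of this subcase, a geometric frame satisfying the very same linear system with the same initial data, hence coinciding with $\{x,y,n_1,n_2\}$, after which $z$ is recovered uniquely by integrating $z_u=f x$, $z_v=f y$ from $p_0$.
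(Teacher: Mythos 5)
Your proposal is correct and follows essentially the same route as the paper: the authors likewise define $\gamma_1,\gamma_2,\nu,\beta_1$ (with $\beta_2=0$) from the given data, write down the same reduced linear system for $\{x,y,n_1,n_2\}$, and then simply refer back to the Frobenius/pseudo-orthonormality/integration argument of Theorem \ref{Th:FundTh6f}. Your version actually supplies more detail than the paper does — in particular the explicit identification of which integrability conditions survive as (i)–(iii) and the final verification that $\lambda_2=\mu_2=0$ on the constructed surface — and your bookkeeping there is accurate.
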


\begin{proof}
Let us denote $\nu = f^{-1} \sqrt{\left(\ln f^2\right)_{uv}}$,  $\beta_1 = -\left((\ln f^2)_{uv}\right)^{-\frac{1}{2}}\left((\mu_1)_v + \mu_1 (\ln f^2)_v\right)$, 
$\gamma_1 =  \ds \frac{f_u}{f^2}$, $\gamma_2 =  \ds \frac{f_v}{f^2}$, 
 and consider the following system of partial differential equations for the unknown vector
functions $x = x(u,v), \, y = y(u,v), \, n_1 = n_1(u,v), \,n_2 = n_2(u,v)$
in $\R^4_1$:
\begin{equation*}
\begin{array}{ll} \label{E:Th6f2EqSystem1}
\vspace{2mm}
x_u = f \left(\gamma_1\, x + \lambda_1\, n_1  + \mu_1\, n_2\right)
& \quad x_v = f \left(-\gamma_2\, x - \nu\, n_1\right)\\
\vspace{2mm}
y_u = f \left(- \gamma_1\, y - \nu\, n_1 \right)
& \quad y_v = f \gamma_2\, y \\
\vspace{2mm}
(n_1)_u = f \left( - \nu\, x + \lambda_1\, y + \beta_1 \, n_2\right) &
\quad (n_1)_v =  - f \nu\, y  \\
\vspace{2mm}
(n_2)_u = f \left(  \mu_1\, y - \beta_1 \, n_1 \right) &
\quad (n_2)_v =  0
\end{array}
\end{equation*}

Further, the proof of the theorem follows the steps in the proof of Theorem \ref{Th:FundTh6f}.

\end{proof}

\vskip 1mm 
\textbf{Acknowledgments:}
The  authors are partially supported by the National Science Fund, Ministry of Education and Science of Bulgaria under contract KP-06-N52/3.

\vskip 3mm

\end{document}